\newtheorem{theorem}{Theorem}[section]
\newtheorem{lemma}[theorem]{Lemma}
\theoremstyle{definition}
\numberwithin{equation}{section}
\def\be{\begin{equation}}
\def\ee{\end{equation}}
\newcounter{alphabet}
\begin{document}
\bibliographystyle{amsplain}
\title[Second Hankel determinant of logarithmic inverse coefficients]{Second Hankel Determinant for Logarithmic Inverse Coefficients of Convex and Starlike Functions}
\author[Vasudevarao Allu]{Vasudevarao Allu}
\address{Vasudevarao Allu, School of Basic Sciences, Indian Institute of Technology Bhubaneswar,
Bhubaneswar-752050, Odisha, India.}
\email{avrao@iitbbs.ac.in}
\author[Amal Shaji]{Amal Shaji}
\address{Amal Shaji, School of Basic Sciences, Indian Institute of Technology Bhubaneswar,
Bhubaneswar-752050, Odisha, India.}
\email{amalmulloor@gmail.com}
\subjclass[2010]{30C45, 30C50, 30C55.}
\keywords{Univalent functions, Logarithmic coefficients, Hankel determinant, Starlike and Convex functions, Schwarz function.}

\begin{abstract}
In this paper, we obtain the sharp bounds of the second Hankel determinant of logarithmic  inverse coefficients  for the starlike and convex functions.
\end{abstract}

\maketitle

\section{Introduction}\label{Introduction}
Let $\mathcal{H}$ denote the class of analytic functions in the unit disk $\mathbb{D}:=\{z\in\mathbb{C}:\, |z|<1\}$. Here $\mathcal{H}$ is 
a locally convex topological vector space endowed with the topology of uniform convergence over compact subsets of $\mathbb{D}$. Let $\mathcal{A}$ denote the class of functions $f\in \mathcal{H}$ such that $f(0)=0$ and $f'(0)=1$.  Let $\mathcal{S}$ 
denote the subclass of  $\mathcal{A}$ consisting of functions which are univalent ({\em i.e., one-to-one}) in $\mathbb{D}$. 
If $f\in\mathcal{S}$ then it has the following series representation
\begin{equation}\label{f}
	f(z)= z+\sum_{n=2}^{\infty}a_n z^n, \quad z\in \mathbb{D}.
\end{equation}

For $q,n \in \mathbb{N}$, the Hankel determinant $H_{q,n}(f)$ of Taylor’s coefficients of function $f \in \mathcal{A}$ of the form \eqref{f} is defined by
$$
H_{q,n}(f) =
\begin{vmatrix}
	a_n & a_{n+1}  & \cdots & a_{n+q-1} \\ 
	a_{n+1} & a_{n+2}  & \cdots & a_{n+q} \\
	\vdots & \vdots &  \ddots &\vdots \\
	a_{n+q-1} & a_{n+q}  & \cdots & a_{n+2(q-1)}
\end{vmatrix}.
$$
The Hankel determinant for various order is also studied recently by several authors in different contexts;
for instance see \cite{Pom66,Pom67,ALT}.
One can easily observe that the Fekete-Szeg\"{o} functional is the second Hankel determinant $H_{2,1}(f)$. Fekete-Szeg$\ddot{o}$  then
further generalized the estimate $|a_3 - \mu a_2
^2|$ with $\mu$ real for $f$ given by \eqref{f} (see \cite[Theorem 3.8]{Duren-book-1983}). \\[2mm]
Let $g$ be the inverse function of $f\in \mathcal{S}$  defined in a neighborhood of the
origin with the Taylor series expansion
\begin{equation}\label{inverse}
g(w)=f^{-1}(w)=w+\sum_{n=2}^{\infty}A_n w^n,
\end{equation}
where we may choose $|w| < 1/4$, as we know from Koebe’s $1/4$-theorem. Using
 variational method, L\"{o}wner \cite{Lowner} obtained the sharp estimate:
$$
|A_n| \leq K_n  \quad \text{for each}\,\,\, n \in \mathbb{N}$$
where $K_n = (2n)!/(n!(n + 1)!)$ and $K(w) = w + K_2w_2 + K_3w_3 + \cdots $ is the
inverse of the Koebe function. There has been a good deal of interest in determining the behavior of the inverse coefficients of $f$ given in \eqref{f} when the
corresponding function $f$ is restricted to some proper geometric subclasses
of $\mathcal{S}$.\\[2mm]
Let $f(z)= z+\sum_{n=2}^{\infty}a_n z^n $ be a function in  class $\mathcal{S}$. Since $f(f^{-1})(w)=w$ and using \eqref{inverse}, it follows that
\begin{equation}\label{inversetaylor}
\begin{aligned}
& A_{2}=-a_2,\\[2mm]
& A_{3}=-a_3+2a_2^2,\\[2mm]
& A_{4}=-a_4+5a_2a_3-5a_2^3.
\end{aligned}
\end{equation}
\\[2mm]
The {\it Logarithmic coefficients} $\gamma_{n}$ of $f\in \mathcal{S}$ are defined by,
\begin{equation}\label{amal-1}
	F_{f}(z):= \log\frac{f(z)}{z}=2\sum\limits_{n=1}^{\infty}\gamma_{n}z^{n}, \quad z \in \mathbb{D}.
\end{equation}
The logarithmic coefficients $\gamma_{n}$ play a central role in the theory of univalent functions. A very few exact upper bounds for $\gamma_{n}$ seem to have been established. The significance of this problem in the context of Bieberbach conjecture was pointed by Milin\cite{milin} in his conjecture. Milin \cite{milin} has conjectured that for $f\in \mathcal{S}$ and $n\ge 2$, 
$$\sum\limits_{m=1}^{n}\sum\limits_{k=1}^{m}\left(k|\gamma_{k}|^{2}-\frac{1}{k}\right)\le 0,$$
which led De Branges, by proving this conjecture, to the proof of Bieberbach conjecture  \cite{De Branges-1985}. For the Koebe function $k(z)=z/(1-z)^{2}$, the logarithmic coefficients are $\gamma_{n}=1/n$. Since the Koebe function $k$ plays the role of extremal function for most of the extremal problems in the class $\mathcal{S}$, it is expected that $|\gamma_{n}|\le1/n$ holds for functions in $\mathcal{S}$. But this is not true in general, even in order of magnitude. Indeed, there exists a bounded function $f$ in the class $\mathcal{S}$ with logarithmic coefficients $\gamma_{n}\ne O(n^{-0.83})$ (see \cite[Theorem 8.4]{Duren-book-1983}). By differentiating \eqref{amal-1} and  the equating coefficients we obtain
\begin{equation}\label{gamma}
\begin{aligned}
& \gamma_{1}=\frac{1}{2}a_{2}, \\[2mm]
& \gamma_{2}=\frac{1}{2}(a_{3}-\frac{1}{2}a_{2}^{2}),\\[2mm]
& \gamma_{3}=\frac{1}{2}(a_{4}-a_{2}a_{3}+\frac{1}{3}a_{2}^{3}).
\end{aligned}
\end{equation}
If $f\in \mathcal{S}$, it is easy to see that $|\gamma_{1}|\le 1$, because $|a_2| \leq 2$. Using the Fekete-Szeg$\ddot{o}$ inequality \cite[Theorem 3.8]{Duren-book-1983} for functions in $\mathcal{S}$ in (1.4), we obtain the sharp estimate 
$$|\gamma_{2}|\le\frac{1}{2}\left(1+2e^{-2}\right)=0.635\ldots.$$
For $n\ge 3$, the problem seems much harder, and no significant bound for $|\gamma_{n}|$ when $f\in \mathcal{S}$ appear to be known. In 2017, Ali and Allu\cite{vasu2017} obtained the initial logarithmic coefficients bounds for close-to-convex functions. The problem of computing the bound of the logarithmic coefficients is also considered in \cite{cho,PSW20,vasu-2018,Thomas-2016} for several subclasses of close-to-convex functions.
\\[2mm]
The notion of logarithmic inverse coefficients, {\em i.e.}, logartithmic coefficients of inverse of $f$, was proposed by ponnusamy {\it et al.} \cite{samyinverselog}. The {\it logarithmic
inverse coefficients} $\Gamma_n$, $n \in \mathbb{N}$, of $f$ are defined by the equation
\begin{equation}\label{Gamma}
	F_{f^{-1}}(w):= \log\frac{f^{-1}(w)}{w}=2\sum\limits_{n=1}^{\infty}\Gamma_{n}w^{n}, \quad |w|<1/4.
\end{equation}
In \cite{samyinverselog} Ponnusamy {\it et al.} found the sharp upper bound  for the logarithmic inverse coefficients for the class $\mathcal{S}$. In fact ponnusamy {\it et al.} \cite{samyinverselog} proved that when $f\in \mathcal{S}$, $$|\Gamma_n| \leq \frac{1}{2n}
  \left(
  \begin{matrix}
    2n \\
    n
  \end{matrix}
  \right),
  \quad n \in \mathbb{N}
$$
and equality holds only for the Koebe function or one of its rotations. Further, ponnusamy {\it et al.} \cite{samyinverselog} obtained sharp bound for the initial logarithmic inverse coefficients for some of the important geometric subclasses of $
\mathcal{S}$.\\

Recently, Kowalczyk and Lecko \cite{adam} together have proposed the study of the Hankel determinant whose entries are logarithmic coefficients of $ f \in \mathcal{S}$, which is given by

$$
H_{q,n}(F_f/2) =
\begin{vmatrix}
	\gamma_n & \gamma_{n+1}  & \cdots & \gamma_{n+q-1} \\ 
	\gamma_{n+1} & \gamma_{n+2}  & \cdots & \gamma_{n+q} \\
	\vdots & \vdots &  \ddots &\vdots \\
	\gamma_{n+q-1} & \gamma_{n+q}  & \cdots & \gamma_{n+2(q-1)}
\end{vmatrix}.
$$
Kowalczyk and Lecko \cite{adam} have obtained the sharp bound of the second Hankel determinant of $F_f/2$, {\em i.e.,} $H_{2,1}(F_f/2)$ for starlike and convex functions. The problem of computing the sharp bounds of $H_{2,1}(F_f/2)$ has been considered by many authors for various subclasses of $\mathcal{S}$ (See \cite{vibhuthi,vibhuthi2,adam2,Mundalia}). 
\\

In this paper, we consider the notion of the second Hankel determinant for logarithmic inverse coefficients.
Let $ f \in \mathcal{S}$ given by \eqref{f}, then the second Hankel determinant of $F_{f^{-1}}/2$ by using \eqref{gamma}, is given by

\begin{equation}\label{hankel}
\begin{aligned}
	H_{2,1}(F_{f^{-1}}/2)
&=\Gamma_1\Gamma_3-\Gamma_{2}^2 \\[2mm]
&=\frac{1}{4}\left(A_2A_4-A_3^2+\frac{1}{4}A_2^4\right)\\[2mm]
&=\frac{1}{48}\left(13a_2^4-12a_2^2a3-12a_3^2+12a_2a_4\right).
\end{aligned}
\end{equation}
\\[1mm]
It is now appropriate to remark that $H_{2,1}(F_{f^{-1}}/2)$ is invariant under rotation, since for $f_{\theta}(z):=e^{-i \theta} f\left(e^{i \theta} z\right), \theta \in \mathbb{R}$ when $f \in \mathcal{S}$ we have
$$
H_{2,1}(F_{f_{\theta}^{-1}}/2)=\frac{e^{4 i \theta}}{48}\left(13a_2^4-12a_2^2a3-12a_3^2+12a_2a_4\right)=e^{4 i \theta} H_{2,1}(F_{f^{-1}}/2) .
$$
The main aim of this paper is to find sharp upperbound for $|H_{2,1}(F_{f^{-1}}/2)|
$ when $f$ belongs to the class of convex or starlike functions. A domain $\Omega \subseteq \mathbb{C}$ is said to be starlike domain with respect to a point $z_{0}\in \Omega$ if the line segment joining $z_{0}$ to any point in $\Omega$ lies entirely in $\Omega$. If $z_0$ is the origin then we say that $\Omega$ is a starlike domain. A function $f \in \mathcal{A}$ is said to be starlike function if $f(\mathbb{D})$ is a starlike domain. We denote by $\mathcal{S}^*$ the class of starlike functions $f$ in $\mathcal{S}$. It is well-known that a function $f \in \mathcal{A}$ is in $\mathcal{S}^*$ if, and only if, 
\begin{equation}\label{star}
{\rm Re\,}\left( \cfrac{zf'(z)}{f(z)} \right) > 0 \quad \text{for}\,\, z \in \mathbb{D}.
\end{equation}
Further, a domain $\Omega\subseteq\mathbb{C}$ is called convex if the line segment joining any two points of $\Omega$ lies entirely in $\Omega$. A function $f\in\mathcal{A}$ is called convex if $f(\mathbb{D})$ is a convex domain. We denote $\mathcal{C}$  the class of convex functions in $\mathcal{S}$. A function $f \in \mathcal{A}$ is in $\mathcal{C}$ if, and only if, 
\begin{equation}\label{convex}
{\rm Re\,}\left( 1+\cfrac{zf''(z)}{f'(z)} \right) > 0 \quad \text{for}\,\, z \in \mathbb{D}.
\end{equation}
\section{Preliminary Results}
In this section, we present the key lemmas which will be used to prove the main results of this paper. Let $\mathcal{P}$ denote the class of all analytic functions $p$ having positive real part in $\mathbb{D}$, with the form
\begin{equation}\label{p}
p(z)=1+c_{1} z+c_{2} z^{2}+c_{3} z^{3}+ \cdots .
\end{equation}
A member of $\mathcal{P}$ is called a Carathéodory function. It is known that $\left|c_{n}\right| \leq 2, n \geq 1$ for a function $p \in \mathcal{P}$. By using \eqref{star} and \eqref{convex}, functions in the classes $\mathcal{S}^*$ and $\mathcal{C}$ can be represented interms of functions in Carathéodory class $\mathcal{P}$.
\\[2mm]

Parametric representations of the coefficients are often useful. In Lemma \ref{caratheodary}, the formula \eqref{c1} is due to Carathéodary \cite{Duren-book-1983}. The formula \eqref{c2} can be found in \cite{Pombook}. In 1982, Libera and Zlotkiewicz \cite{LZ1,LZ2} derived the formula \eqref{c3} with the assumption that $c_1 \geq 0$. Later, Cho {\it et al.} \cite{cholecko} have derived the formula \eqref{c3} in general case and they have also given the explicit form of extremal function.

\begin{lemma}\label{caratheodary}

  If $p \in \mathcal{P}$ is of the form \eqref{p}, then

\begin{equation}\label{c1}
 c_{1}=2 p_{1}
 \end{equation}
 \begin{equation}\label{c2}
 c_{2}=2 p_{1}^{2}+2\left(1-p_{1}^{2}\right) p_{2}
 \end{equation}
and
\begin{equation}\label{c3}
c_{3}=2 p_{1}^{3}+4\left(1-p_{1}^{2}\right) p_{1} p_{2}-2\left(1-p_{1}^{2}\right) p_{1} p_{2}^{2}+2\left(1-p_{1}^{2}\right)\left(1-\left|p_{2}\right|^{2}\right) p_{3}
\end{equation}

for some $p_{1}, p_{2}, p_{3} \in \overline{\mathbb{D}}:=\{z \in \mathbb{C}:|z| \leq 1\}$.
\\[2mm]
For $p_{1} \in \mathbb{T}:=\{z \in \mathbb{C}:|z|=1\}$, there is a unique function $p \in \mathcal{P}$ with $c_{1}$ as in \eqref{c1}, namely

$$
p(z)=\frac{1+p_{1} z}{1-p_{1} z}, \quad z \in \mathbb{D} .
$$
\\[2mm]
For $p_{1} \in \mathbb{D}$ and $p_{2} \in \mathbb{T}$, there is a unique function $p \in \mathcal{P}$ with $c_{1}$ and $c_{2}$ as in \eqref{c1} and \eqref{c2}, namely

\begin{equation}\label{second}
p(z)=\frac{1+\left(p_{1}+\overline{p_{1}} p_{2}\right) z+p_{2} z^{2}}{1-\left(p_{1}-\overline{p_{1}} p_{2}\right) z-p_{2} z^{2}} .
\end{equation}
\\[2mm]
For $p_{1}, p_{2} \in \mathbb{D}$ and $p_{3} \in \mathbb{T}$, there is unique function $p \in \mathcal{P}$ with $c_{1}, c_{2}$, and $c_{3}$ as in \eqref{c1}-\eqref{c2}, namely

$$
p(z)=\frac{1+\left(\overline{p_{2}} p_{3}+\overline{p_{1}} p_{2}+p_{1}\right) z+\left(\overline{p_{1}} p_{3}+p_{1} \overline{p_{2}} p_{3}+p_{2}\right) z^{2}+p_{3} z^{3}}{1+\left(\overline{p_{2}} p_{3}+\overline{p_{1}} p_{2}-p_{1}\right) z+\left(\overline{p_{1}} p_{3}-p_{1} \overline{p_{2}} p_{3}-p_{2}\right) z^{2}-p_{3} z^{3}}, \quad z \in \mathbb{D} .
$$
\end{lemma}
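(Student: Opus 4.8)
The plan is to pass from the Carathéodory class to the class of Schwarz functions, where the three parameters arise naturally from an iterated application of Schwarz's lemma. First I would set $\omega=(p-1)/(p+1)$; since ${\rm Re}\,p>0$ and $p(0)=1$, the map $\omega$ is analytic on $\mathbb{D}$, fixes the origin and sends $\mathbb{D}$ into $\overline{\mathbb{D}}$, i.e. it is a Schwarz function, and conversely $p=(1+\omega)/(1-\omega)$. Writing $\omega(z)=\sum_{n\ge 1}d_n z^n$ and expanding $p=(1+\omega)(1+\omega+\omega^2+\cdots)=1+2\omega+2\omega^2+\cdots$, comparison of coefficients gives $c_1=2d_1$, $c_2=2d_2+2d_1^2$ and $c_3=2d_3+4d_1d_2+2d_1^3$. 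Thus everything reduces to a parametrization of the first three Taylor coefficients of a Schwarz function.

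Next I would parametrize $(d_1,d_2,d_3)$ by the Schur algorithm. By Schwarz's lemma $\omega(z)/z$ maps $\mathbb{D}$ into $\overline{\mathbb{D}}$, so I set $p_1:=d_1\in\overline{\mathbb{D}}$; this is exactly \eqref{c1} since $c_1=2p_1$, and $|p_1|\le 1$ is precisely Carathéodory's bound $|c_1|\le 2$. If $|p_1|<1$ I peel off the value $p_1$ by the disk automorphism $w\mapsto (w-p_1)/(1-\overline{p_1}w)$ and divide by $z$ to obtain a second Schwarz-type map whose value at the origin is a parameter $p_2\in\overline{\mathbb{D}}$, and one further step produces $p_3\in\overline{\mathbb{D}}$. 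Carrying out the substitution yields $d_2=(1-p_1^2)p_2$ and $d_3=(1-p_1^2)\big[(1-|p_2|^2)p_3-p_1p_2^2\big]$ in the normalization $c_1\ge 0$, where $p_1$ is real; feeding these into the relations of the previous paragraph reproduces \eqref{c2} and \eqref{c3}. To confirm $p_2\in\overline{\mathbb{D}}$ I would solve \eqref{c2} for $p_2$ (when $p_1^2\ne 1$) and estimate: since $c_2-\tfrac12 c_1^2=2d_2$ obeys the sharp bound $|c_2-\tfrac12 c_1^2|\le 2-\tfrac12|c_1|^2=2(1-|p_1|^2)$, and since $1-|p_1|^2\le|1-p_1^2|$ for every $p_1\in\overline{\mathbb{D}}$ (this reduces to $\cos(2\arg p_1)\le 1$ after squaring), we obtain $|p_2|=|c_2-2p_1^2|/\big(2|1-p_1^2|\big)\le (1-|p_1|^2)/|1-p_1^2|\le 1$.

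The hard part will be \eqref{c3} for complex $c_1$. For $c_1\ge 0$ the parameter $p_1$ is real, the Schur output coincides termwise with \eqref{c3}, and one reads off $p_3\in\overline{\mathbb{D}}$ directly from $|p_3|\le 1$; this is the Libera--Zlotkiewicz case. For complex $c_1$ the subtlety is that the Schur algorithm produces the variant with $|p_1|^2$ and $\overline{p_1}$ in place of the $p_1^2$ and $p_1$ appearing in \eqref{c3}, and a rotation $p(z)\mapsto p(\overline{\lambda}z)$ reducing to $c_1\ge 0$ does \emph{not} transfer \eqref{c3} term by term, because the unimodular phase $\lambda$ attaches to the $p_2$ and the $p_2^2$ contributions with different powers. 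The clean way around this is to fix $c_1,c_2$ and describe the exact disk of admissible values of $c_3$ for $p\in\mathcal{P}$, and then verify that this disk lies in the image of $\overline{\mathbb{D}}$ under the affine map $p_3\mapsto$ (right-hand side of \eqref{c3}); solving for $p_3$ on that image then automatically returns a value in $\overline{\mathbb{D}}$. This containment is a genuine inequality between the two centre--radius descriptions and is where the real work lies; it is the content of the general complex case established by Cho {\it et al.} \cite{cholecko}.

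Finally, the uniqueness statements fall out of the equality cases of Schwarz's lemma at each stage. If $p_1\in\mathbb{T}$, then $\omega(z)/z$ is a unimodular constant, so $\omega(z)=p_1 z$ and hence $p(z)=(1+p_1z)/(1-p_1z)$, the asserted unique function. If $|p_1|<1$ but $p_2\in\mathbb{T}$, the second Schur iterate is a unimodular constant, which pins down $\omega$ as a degree-two Blaschke-type quotient and, after forming $(1+\omega)/(1-\omega)$, gives the rational function \eqref{second}; the case $p_1,p_2\in\mathbb{D}$, $p_3\in\mathbb{T}$ is identical one step further along. In each boundary case the algorithm terminates, so $\omega$, and therefore $p$, is determined uniquely, and a direct computation identifies it with the stated explicit form.
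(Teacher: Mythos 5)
The paper offers no proof of this lemma at all: it is quoted verbatim from the literature, with \eqref{c1} attributed to Carath\'eodory, \eqref{c2} to Pommerenke, and \eqref{c3} to Libera--Zlotkiewicz (for $c_1\ge 0$) and to Cho \emph{et al.}\ \cite{cholecko} (in general). So there is nothing in the paper to compare against line by line; what you have written is a reconstruction of the argument underlying those citations, and it is the right one. Your reduction $p=(1+\omega)/(1-\omega)$ to a Schwarz function, the coefficient relations $c_1=2d_1$, $c_2=2d_2+2d_1^2$, $c_3=2d_3+4d_1d_2+2d_1^3$, the Schur-algorithm parametrization, the bound $|p_2|\le(1-|p_1|^2)/|1-p_1^2|\le 1$, and the uniqueness statements via the equality case of the Schwarz lemma (forcing $\omega$ to be a Blaschke product of degree $1$, $2$ or $3$, which upon substitution gives exactly the displayed rational functions) are all correct and are essentially how \cite{LZ1,LZ2,cholecko} proceed.

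Two caveats are worth making explicit. First, your treatment of \eqref{c3} for complex $c_1$ is not actually a proof: you describe the disk-containment strategy and then hand the ``real work'' back to \cite{cholecko}, which is exactly where the paper already points; so on the one genuinely hard step your write-up is no more self-contained than the paper's citation. Second, you correctly observe that the Schur algorithm naturally produces $|p_1|^2$ and $\overline{p_1}$ where the lemma as printed has $p_1^2$ and $p_1$ --- and indeed the general complex-coefficient formula proved in \cite{cholecko} is the $|p_1|^2$, $\overline{p_1}$ version, not the one displayed here. The two coincide precisely when $p_1$ is real, which is the only case the paper ever uses (both theorems normalize $c_1\in[0,2]$ by a rotation before invoking the lemma). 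So your proof is complete for the version of the statement that is actually applied; for the literal statement with complex $p_1$ the containment you would need is not established, and it is not clear it follows from \cite{cholecko} without restating the lemma in its $|p_1|^2$ form.
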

Next we recall the following well-known result due to Choi {\it et al.} \cite{choi}. Lemma $2.2$ plays an important role in the proof of our main results.

\begin{lemma}\label{y(a,b,c)}
Let $A, B, C$ be real numbers and

$$
Y(A, B, C):=\max _{z \in \overline{\mathbb{D}}}\left(\left|A+B z+C z^{2}\right|+1-|z|^{2}\right) .
$$

(i) If $A C \geq 0$, then

$$
Y(A, B, C)= \begin{cases}|A|+|B|+|C|, & |B| \geq 2(1-|C|), \\[2mm] 1+|A|+\cfrac{B^{2}}{4(1-|C|)}, & |B|<2(1-|C|) .\end{cases}
$$

(ii) If $A C<0$, then

$$
Y(A, B, C)= \begin{cases}1-|A|+\cfrac{B^{2}}{4(1-|C|)}, & -4 A C\left(C^{-2}-1\right) \leq B^{2} \wedge|B|<2(1-|C|), \\[2mm] 1+|A|+\cfrac{B^{2}}{4(1+|C|)}, & B^{2}<\min \left\{4(1+|C|)^{2},-4 A C\left(C^{-2}-1\right)\right\}, \\[2mm] R(A, B, C), & \text { otherwise, }\end{cases}
$$

where

$$
R(A, B, C)= \begin{cases}|A|+|B|+|C|, & |C|(|B|+4|A|) \leq|A B|, \\[2mm] -|A|+|B|+|C|, & |A B| \leq|C|(|B|-4|A|), \\[2mm] (|A|+|C|) \sqrt{1-\cfrac{B^{2}}{4 A C}}, & \text { otherwise. }\end{cases}
$$
\end{lemma}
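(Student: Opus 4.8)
The plan is to convert this into a one–variable optimization. Writing $z = re^{i\theta}$ with $r \in [0,1]$, the quantity to be maximized is $|A + Bre^{i\theta} + Cr^2 e^{2i\theta}| + 1 - r^2$. Since $A,B,C$ are real, the modulus is unchanged under $z \mapsto \bar z$ and under the simultaneous sign change $(A,B,C)\mapsto(-A,-B,-C)$, while $z\mapsto -z$ flips the sign of $B$; hence one may assume $B \ge 0$ and, say, $A \ge 0$, which is why the final formulas depend only on $|A|,|B|,|C|$ and on the sign of $AC$. First I would fix $r$ and maximize over $\theta$. A direct expansion gives
\[
|A + Bre^{i\theta} + Cr^2 e^{2i\theta}|^2 = 4ACr^2\cos^2\theta + 2Br(A+Cr^2)\cos\theta + (A-Cr^2)^2 + B^2 r^2,
\]
so with $t = \cos\theta \in [-1,1]$ the squared modulus becomes a quadratic $Q(t) = 4ACr^2\,t^2 + 2Br(A+Cr^2)\,t + (A-Cr^2)^2 + B^2r^2$, whose leading coefficient has the sign of $AC$.

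When $AC \ge 0$, $Q$ is convex in $t$, so its maximum on $[-1,1]$ is attained at $t = \pm 1$, where $Q(\pm 1) = (A \pm Br + Cr^2)^2$. Thus $\max_\theta|A+Bz+Cz^2| = |A+Cr^2| + |B|r = |A| + |C|r^2 + |B|r$, the last equality using $AC \ge 0$. It then remains to maximize $\psi(r) = 1 + |A| + |B|r - (1-|C|)r^2$ over $[0,1]$; in the regime of interest $|C| \le 1$, so this parabola opens downward with vertex at $r^\ast = |B|/(2(1-|C|))$, and comparing $r^\ast$ with $1$ yields the dichotomy $|B| \ge 2(1-|C|)$ (maximum at $r = 1$, value $|A|+|B|+|C|$) versus $|B| < 2(1-|C|)$ (interior maximum, value $1 + |A| + B^2/(4(1-|C|))$). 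This establishes part (i).

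When $AC < 0$, $Q$ is concave in $t$, so for each fixed $r$ its maximum may occur at the interior vertex $t_0 = -B(A+Cr^2)/(4ACr)$ or, if $t_0 \notin [-1,1]$, at an endpoint; evaluating $Q$ at the vertex produces, after taking square roots, a genuinely nonpolynomial function of $r$ to be maximized over $[0,1]$. The bulk of the work — and the main obstacle — is this second stage: one must determine for which $r$ the vertex is admissible, compare the interior-vertex branch against the endpoint-in-$t$ branches and against the boundary values $r = 0$ and $r = 1$, and identify which configuration dominates. Tracking these competing critical values is exactly what generates the threshold conditions $-4AC(C^{-2}-1) \le B^2$ and $B^2 < 4(1+|C|)^2$, and the finer three-way split — governed by the comparisons $|C|(|B|+4|A|) \le |AB|$ and $|AB| \le |C|(|B|-4|A|)$ — that defines $R(A,B,C)$. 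The calculus is elementary case by case, but the delicate bookkeeping of which branch is active on which parameter region is where the real care is needed; assembling these pieces completes part (ii).
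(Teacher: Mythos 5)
First, a remark on context: the paper itself offers no proof of this lemma --- it is quoted verbatim as a known result of Choi, Kim and Sugawa \cite{choi} --- so there is no in-paper argument to compare against; the proposal has to stand on its own.

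Your part (i) is essentially a complete and correct proof. The expansion $|A+Bre^{i\theta}+Cr^2e^{2i\theta}|^2=4ACr^2t^2+2Br(A+Cr^2)t+(A-Cr^2)^2+B^2r^2$ with $t=\cos\theta$ is right, the convexity argument forcing $t=\pm1$ when $AC\ge 0$ is right, and the one-variable maximization of $\psi(r)=1+|A|+|B|r-(1-|C|)r^2$ gives exactly the stated dichotomy. (You should not wave at ``the regime of interest $|C|\le 1$'': when $|C|\ge 1$ the parabola opens upward and the maximum is at $r=1$, but since then $|B|\ge 0\ge 2(1-|C|)$ automatically, this lands in the first branch anyway --- a one-line check worth including.)

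Part (ii), however, is not a proof; it is a plan. You correctly identify that for $AC<0$ the quadratic $Q(t)$ is concave, that its vertex is $t_0=-B(A+Cr^2)/(4ACr)$, and that the answer comes from comparing the interior-vertex branch against the endpoint branches --- but you then stop, stating that ``the calculus is elementary case by case'' and that ``assembling these pieces completes part (ii).'' Everything that actually produces the statement is in the omitted part: one must compute $Q(t_0)=(A-Cr^2)^2+B^2r^2-B^2(A+Cr^2)^2/(4AC)$, determine explicitly for which $r\in(0,1]$ the condition $|t_0|\le 1$ holds, maximize each resulting expression in $r$ over the correct subinterval, and verify that the winners of these comparisons are precisely the thresholds $-4AC(C^{-2}-1)\le B^2$, $B^2<4(1+|C|)^2$, $|C|(|B|+4|A|)\le|AB|$ and $|AB|\le|C|(|B|-4|A|)$, together with the value $(|A|+|C|)\sqrt{1-B^2/(4AC)}$ in the residual case. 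None of this is routine bookkeeping that can be taken on faith --- it is the entire content of case (ii), and without it the lemma is unproved in exactly the regime ($AC<0$) that both theorems in the paper actually use.
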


\section{Main Results}\label{sec3}
Now we will prove the first main result of this paper. We obtain the following sharp bound for $H_{2,1}(F_{f^{-1}}/2)$ for functions in the class $\mathcal{C}$.
\begin{theorem}
Let $f\in\mathcal{C}$ given by \eqref{f} then
\begin{equation}\label{thm1}
	|H_{2,1}(F_{f^{-1}}/2)| \leq \frac{1}{33}.
\end{equation}
	The inequality is sharp.

\end{theorem}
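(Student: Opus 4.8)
The plan is to parametrise the class $\mathcal{C}$ through the Carathéodory class and reduce the estimate to a one-variable maximisation that Lemma \ref{y(a,b,c)} can settle.

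First I would use the convexity criterion \eqref{convex}: write $1 + zf''(z)/f'(z) = p(z)$ for some $p \in \mathcal{P}$ with $p(z) = 1 + c_1 z + c_2 z^2 + \cdots$. Comparing the Taylor coefficients in $zf''(z) = (p(z)-1)f'(z)$ yields $a_2 = \tfrac12 c_1$, $a_3 = \tfrac16(c_1^2 + c_2)$ and $a_4 = \tfrac1{24}(c_1^3 + 3 c_1 c_2 + 2 c_3)$. Substituting these into \eqref{hankel} expresses $48\,H_{2,1}(F_{f^{-1}}/2)$ as an explicit polynomial in $c_1, c_2, c_3$.

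Next, since $H_{2,1}(F_{f^{-1}}/2)$ is rotation invariant (as noted after \eqref{hankel}), I would take $c_1 \in [0,2]$ and insert the representations \eqref{c1}–\eqref{c3} of Lemma \ref{caratheodary} with $p_1 = t \in [0,1]$ real. A direct substitution collapses the expression to the compact form
\begin{equation*}
48\,H_{2,1}(F_{f^{-1}}/2) = 2t(1-t^2)\Big[A + B p_2 + C p_2^2 + (1-|p_2|^2)p_3\Big],
\end{equation*}
valid for $t \in (0,1)$, where $A = \dfrac{t^3}{2(1-t^2)}$, $B = -t$ and $C = -\dfrac{2+t^2}{3t}$. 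The triangle inequality together with $|p_3|\le 1$ then gives
\begin{equation*}
48\,|H_{2,1}(F_{f^{-1}}/2)| \le 2t(1-t^2)\Big(\big|A + B p_2 + C p_2^2\big| + 1 - |p_2|^2\Big) \le 2t(1-t^2)\,Y(A,B,C).
\end{equation*}

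Now I would invoke Lemma \ref{y(a,b,c)}. Here $A \ge 0$ and $C < 0$, so $AC < 0$ and we are in case (ii); moreover $|C| = (2+t^2)/(3t) \ge 1$ on $(0,1)$, which rules out the first branch (it requires $|C|<1$), while $-4AC(C^{-2}-1) < 0 \le B^2$ rules out the second, so the $R(A,B,C)$ branch is active. Evaluating the conditions defining $R$ shows that on the relevant subinterval the value is $-|A| + |B| + |C|$, whence $2t(1-t^2)\,Y(A,B,C) = \tfrac13(4 + 4t^2 - 11 t^4)$. Maximising $4 + 4t^2 - 11t^4$ over $[0,1]$ gives the interior critical point $t^2 = 2/11$ with maximal value $48/11$, so $48\,|H_{2,1}| \le 16/11$, that is, $|H_{2,1}| \le 1/33$. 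It remains to dispose of the complementary $t$-range, where $R$ takes the square-root form $(|A|+|C|)\sqrt{1 - B^2/(4AC)}$, and of the degenerate endpoints $t=0$ (giving $\le 1/36$) and $t=1$ (giving $1/48$); all stay strictly below $1/33$.

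The step I expect to be the main obstacle is the case analysis for $R(A,B,C)$: one must track carefully which of its three branches is active as $t$ varies and confirm that the branch other than $-|A|+|B|+|C|$ never exceeds $1/33$. For sharpness I would reverse-engineer the extremal datum. At $t^2 = 2/11$ the maximiser $p_2$ of $|A + B p_2 + C p_2^2|$ lies on the unit circle $\mathbb{T}$, so the $(1-|p_2|^2)p_3$ term drops out; feeding $p_1 = t$ and this $p_2 \in \mathbb{T}$ into \eqref{second} produces an explicit $p \in \mathcal{P}$, and integrating $1 + zf''(z)/f'(z) = p(z)$ yields an explicit convex function that attains $|H_{2,1}(F_{f^{-1}}/2)| = 1/33$.
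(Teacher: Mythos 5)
Your proposal is correct and follows essentially the same route as the paper: the Carathéodory parametrization of Lemma \ref{caratheodary} with $c_1\in[0,2]$ by rotation invariance, reduction to $2t(1-t^2)\,Y(A,B,C)$ with the same $A$, $B$, $C$, elimination of the first two branches of case (ii) of Lemma \ref{y(a,b,c)}, maximization of $-|A|+|B|+|C|$ at $t^2=2/11$, a check that the square-root branch of $R$ stays below $1/33$ on the complementary interval, and extremality via \eqref{second} with $p_2\in\mathbb{T}$.
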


\begin{proof}
Let $f\in \mathcal{C}$ be of the form \eqref{f}. Then by \eqref{convex},
\begin{equation}\label{3.1.1}
1+\cfrac{zf''(z)}{f'(z)}=p(z)
\end{equation}
for some $p \in \mathcal{P}$ of the form \eqref{p}. 
Since the class $\mathcal{C}$ is invaraint under rotation and the function is also rotationally invariant, we can assume that $c_1 \in [0,2]$.
By comparing the coefficients on both sides of \eqref{3.1.1} yields
	\begin{equation}\label{3.2.2}
		\begin{aligned}
			& a_2=\frac{1}{2}c_1, \\[2mm]
			& a_3=\frac{1}{6}(c_2+c_1^2), \\[2mm]
			& a_4=\frac{1}{24}\left(2c_3+3c_1c_2+c_1^3 \right).
		\end{aligned}
	\end{equation}
Hence by \eqref{hankel},
$$
H_{2,1}(F_{f^{-1}}/2)=\cfrac{1}{2304}\left(11c_1^4-20c_1^2c_2-16c_2^2+24c_1c_3\right).
$$
Now using \eqref{c1}-\eqref{c3} and by simplification, we obtain
\begin{equation}\label{3.1.3}
\begin{aligned}
H_{2,1}(F_{f^{-1}}/2)
&=\frac{p_1^4}{48}-\frac{1}{24}(1-p_1^2)p_1^2p_2-\frac{1}{72}(1-p_1^2)(2+p_1^2)p_2^2\\
& \quad \quad +\frac{1}{24}(1-p_1^2)(1-|p_1^2|)p_1p_3.
\end{aligned}
\end{equation}
Hereby we have the following cases on $p_1$.
\\[1mm]
\textbf{Case 1:} Let $p_1=1$, then by \eqref{3.1.3}, we get
$$
|H_{2,1}(F_{f^{-1}}/2)|=\frac{1}{48}.
$$
\textbf{Case 2:} Let $p_1=0$, then by \eqref{3.1.3}, we get
$$
|H_{2,1}(F_{f^{-1}}/2)|=\frac{1}{36}|p_2^2|\leq\frac{1}{36}.
$$
\textbf{Case 3:} Let $p_1 \in (0,1)$. Applying the triangle inequality in \eqref{3.1.3} and by using the fact
that $|p_3| \leq 1$, we obtain
\begin{equation}\label{3.1.4}
\begin{aligned}
H_{2,1}(F_{f^{-1}}/2)
&=\frac{1}{24}p_1(1-p_1^2)\left(\left| \frac{p_1^3}{2(1-p_1^2}-p_1 p_2-\frac{2+p_1^2}{3p_1}p_2^2 \right|+1-|p_2^2|\right)\\[2mm]
&\leq \frac{1}{24}p_1(1-p_1^2)\left(\left| A+B p_2+C p_2^2 \right|+1-|p_2^2|\right)
\end{aligned}
\end{equation}
where 
$$
A:=\cfrac{p_1^3}{2(1-p_1^2)}, \quad B:=-p_1,\quad C:=-\cfrac{2+p_1^2}{3p_1}.
$$
Since $AC < 0$, so we can apply case (ii) of Lemma \ref{y(a,b,c)}.\\
[3mm]
\textbf{3(a).} Note that for $p_{1} \in(0,1)$, we have
$$
-4 A C\left(\frac{1}{C^{2}}-1\right)-B^{2}=-\cfrac{p_{1}^{2}(14+p_1^2)}{3(2+p_{1}^{2})} \leq  0.
$$
Moreover, the inequality $|B|<2(1-|C|)$ is equivalent to $p_1(4 - 6 p_1 + 5 p_1^2) < 0$ which is not true for $p_{1} \in(0,1)$.\\
[3mm]
\textbf{3(b).} It is easy to check that
$$
\min \left\{4(1+|C|)^{2},-4 A C\left(\frac{1}{C^{2}}-1\right)\right\}=-4 A C\left(\frac{1}{C^{2}}-1\right),
$$
and from $3(a),$ we know that 
$$
-4 A C\left(\frac{1}{C^{2}}-1\right) \leq B^{2}.
$$
Therefore the inequality $B^{2} < \min \left\{4(1+|C|)^{2},-4 A C\left(\frac{1}{C^{2}}-1\right)\right\}$ does not holds for $0<p_1<1$.\\[2mm]
\textbf{3(c).} Note that the inequality $|C|(|B|+4|A|)-|A B| \leq 0$ is equivalent to $4+6p_1^2
-p_1^4\leq 0$, which is false for $p_{1} \in(0,1)$.\\
[2mm]
\textbf{3(d).}
Observe that the inequality 
$$|A B|-|C|(|B|-4|A|)=\cfrac{9 p_1^4+10 p_1^2-4}{1 - p1^2} \leq 0
$$
is equivalent to $9 p_1^4+10 p_1^2-4\leq 0$, which is true for 
$$
0<p_1 \leq p_1'=\frac{1}{3}\sqrt{\sqrt{61}-5}\approx 0.5588.
$$
It follows from Lemma \ref{y(a,b,c)} and the inequality \eqref{3.1.4} that,
\begin{equation}
\begin{aligned}
H_{2,1}(F_{f^{-1}}/2)
&\leq \frac{1}{24}p_1(1-p_1^2)(-|A|+|B|+|C|)\\
&= \frac{1}{144}(4+4p_1^2-11p_1^4)=h(p_1)
\end{aligned}
\end{equation}
where $h(x)=4+4x^2-11x^4, \quad 0<x\leq p_1'$.
By simple calculation we can show that maximum of the function $g(x)$ exists at the point $x_0=\sqrt{2/11}$. \\
Therefore we can conclude that, for $0<p_1\leq p_1',$ we have
$$
|H_{2,1}(F_{f^{-1}}/2)|\leq h\left(\sqrt{\frac{2}{11}}\right)=\frac{1}{33}.
$$
\textbf{3(e).} For $p_1'<p_1<1$, we use the last case of Lemma \ref{y(a,b,c)} together with \eqref{3.1.4} to obtain
\begin{equation}
\begin{aligned}
H_{2,1}(F_{f^{-1}}/2)
&\leq \frac{1}{24}p_1(1-p_1^2)(|C|+|A|) \sqrt{1-\frac{B^{2}}{4 A C}}\\[2mm]
&= \frac{1}{144}(p_1^4-2p_1^2+4)\sqrt{\cfrac{7-p_1^2}{4+2p_1^2}}=k(p_1)
\end{aligned}
\end{equation}
where $k(x)=\cfrac{1}{144}\left(x^4-2x^2+4\right)\sqrt{\cfrac{7-x^2}{4+2x^2}}, \quad p_1'<x<1$.\\[2mm]
Now we want to find the maximum of $k(x)$ over the interval $p_1'<x<1$. We observe that
$$
k'(x)=\cfrac{x}{144}  \sqrt{\cfrac{7 - x^2}{
 4 + 2 x^2}}\left( \frac{92 - 54x^2 - 15 x^4 + 4 x^6}{(-7 + x^2) (2 +
    x^2)}\right)=0,
$$\\[1mm]
    if, and only, if $92 - 54x^2 - 15 x^4 + 4 x^6=0$. However, all the real roots of the last equation lies outside the interval $p_1'<x<1$ and $k'(x)<0$ for $p_1'<x<1$. So $k$ is decreasing and hence $k(x)\leq k(p_1')$for $p_1'<x<1$.\\[1mm]
Therefore we can conclude that for $p_1'<x<1,$
we have
$$
 |H_{2,1}(F_{f^{-1}}/2)|\leq k(p_1') \approx 0.0290035.
$$
Summarizing parts from Case 1-3, it follows the desired inequality \eqref{thm1}.\\
By tracking back the above proof, we see that the equality in \eqref{thm1} holds when it is satisfied that 
\begin{equation}
p_1=\sqrt{\frac{2}{11}},\quad p_3=1,
\end{equation}
and
\begin{equation}\label{abc}
|A+Bp_2+Cp_2^2|+1-|p_2^2|=-|A|+|B|+|C|,
\end{equation}
where
$$
A=\cfrac{\sqrt{\frac{2}{11}}}{9},\,\, B=-\sqrt{\frac{2}{11}},\,\,C=4\sqrt{\frac{2}{11}}.
$$
Indeed we can easily verify that one of the solutions of equation \eqref{abc} is
$$
p_2=1.
$$
In view of Lemma \ref{y(a,b,c)}, we conclude that equality holds for the function $f\in \mathcal{A}$ given by \eqref{convex}, where the function $p \in \mathcal{P}$ of the form \eqref{second} with $p_1=\sqrt{2/11},p_2=1$ and $p_3=1$, that is 
$$
p(z)=\cfrac{1+2\sqrt{2/11}z+z^2}{1-z^2}.
$$
This complete the proof.
\end{proof}
Next, we obtained the following sharp bound for $H_{2,1}(F_{f^{-1}}/2)$ for functions in the class $\mathcal{S}^*$.
\begin{theorem}
Let $f\in\mathcal{S}$ given by \eqref{f} then
\begin{equation}\label{thm2}
	|H_{2,1}(F_{f^{-1}}/2)| \leq \frac{13}{12}.
\end{equation}
	The inequality is sharp.

\end{theorem}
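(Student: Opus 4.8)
The plan is to run the argument of Theorem 3.1 again, now starting from the starlike representation \eqref{star}. For $f\in\mathcal{S}^*$ I would write $zf'(z)/f(z)=p(z)$ with $p\in\mathcal{P}$ of the form \eqref{p}, and, since both $\mathcal{S}^*$ and $H_{2,1}(F_{f^{-1}}/2)$ are rotationally invariant, normalize $c_1=2p_1\in[0,2]$. Comparing coefficients in $zf'(z)=f(z)p(z)$ gives $a_2=c_1$, $a_3=\tfrac12(c_1^2+c_2)$ and $a_4=\tfrac16(c_1^3+3c_1c_2+2c_3)$, which inserted into \eqref{hankel} yields
\[
H_{2,1}(F_{f^{-1}}/2)=\frac{1}{48}\bigl(6c_1^4-6c_1^2c_2-3c_2^2+4c_1c_3\bigr).
\]
Substituting the parametrizations \eqref{c1}--\eqref{c3} and simplifying then puts $H_{2,1}(F_{f^{-1}}/2)$ in a form analogous to \eqref{3.1.3}, namely a combination of $p_1^4$, $(1-p_1^2)p_1^2p_2$, $(1-p_1^2)(3+p_1^2)p_2^2$ and $(1-p_1^2)(1-|p_2|^2)p_1p_3$ with explicit rational coefficients.

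Next I would split on $p_1$ exactly as before. The degenerate values are quick: $p_1=0$ gives $|H_{2,1}(F_{f^{-1}}/2)|=\tfrac14|p_2|^2\le\tfrac14$, while the boundary value $p_1=1$ produces $|H_{2,1}(F_{f^{-1}}/2)|=\tfrac{13}{12}$ and so already pins down the extremal configuration. For $p_1\in(0,1)$ I would factor out $\tfrac13 p_1(1-p_1^2)$, apply the triangle inequality together with $|p_3|\le1$, and reduce the estimate to $Y(A,B,C)$ of Lemma \ref{y(a,b,c)} with
\[
A=\frac{13p_1^3}{4(1-p_1^2)},\qquad B=-\frac{5}{2}p_1,\qquad C=-\frac{3+p_1^2}{4p_1}.
\]
Here $AC<0$, so part (ii) of Lemma \ref{y(a,b,c)} is in force; the first two branches are excluded just as in steps 3(a)--3(b), since one checks $-4AC(C^{-2}-1)<0<B^2$ and $|C|>1$ on $(0,1)$, leaving only the $R(A,B,C)$ branch. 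The sign of $96p_1^4+88p_1^2-15$ divides this branch into the regime $R=-|A|+|B|+|C|$ for small $p_1$ and the regime $R=(|A|+|C|)\sqrt{1-B^2/(4AC)}$ for $p_1$ near $1$.

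The first regime is harmless: there the bound collapses to $-2p_1^4+\tfrac23 p_1^2+\tfrac14$, whose only positive critical point $p_1=1/\sqrt6$ lies to the right of the admissible subinterval, so the expression stays below $\tfrac13$. The genuine obstacle is the second regime, where the estimate becomes
\[
\psi(p_1)=\frac{12p_1^4-2p_1^2+3}{6\sqrt{13}}\sqrt{\frac{16-3p_1^2}{3+p_1^2}},
\]
and I must show $\psi(p_1)\le\tfrac{13}{12}$ with equality only as $p_1\to1$. Because the maximum now sits on the boundary rather than at an interior critical point (contrary to the convex case), the clean route is to square and clear denominators: writing $t=p_1^2$, the inequality $\psi(p_1)^2\le\tfrac{169}{144}$ is equivalent, after multiplying out, to
\[
(1-t)\bigl(-1728t^4+8064t^3+4080t^2+9088t+6015\bigr)\ge0,
\]
in which both factors are strictly positive for $t\in(0,1)$ and vanish only at $t=1$. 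Assembling the cases shows $|H_{2,1}(F_{f^{-1}}/2)|\le\tfrac{13}{12}$ with equality at $p_1=1$; by the uniqueness clause of Lemma \ref{caratheodary} this corresponds to $p(z)=(1+z)/(1-z)$, i.e.\ the Koebe function $f(z)=z/(1-z)^2$, which certifies sharpness.
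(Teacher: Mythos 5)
Your proposal follows the paper's proof essentially step for step: the same representation $zf'/f=p$, the same coefficient formulas and the identity $H_{2,1}(F_{f^{-1}}/2)=\tfrac{1}{48}(6c_1^4-6c_1^2c_2-3c_2^2+4c_1c_3)$, the same reduction to $Y(A,B,C)$ with identical $A$, $B$, $C$, and the same dichotomy governed by the sign of $96p_1^4+88p_1^2-15$; your first-regime bound $-2p_1^4+\tfrac23p_1^2+\tfrac14$ and second-regime function $\psi$ agree exactly with the paper's $\tfrac{1}{12}(3+8p_1^2-24p_1^4)$ and $k$. The one substantive difference is the last step: where the paper simply asserts that $k'(x)>0$ on $(p_1'',1)$ so the supremum is $k(1)=\tfrac{13}{12}$, you square and clear denominators, reducing the claim to $(1-t)\bigl(-1728t^4+8064t^3+4080t^2+9088t+6015\bigr)\ge 0$ with $t=p_1^2$ --- an identity I have checked, and whose quartic factor is obviously positive on $[0,1]$ since $-1728t^4\ge-1728>-6015$. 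This replaces the paper's least rigorous assertion with a fully verifiable computation, at no extra cost. One small omission: before invoking the dichotomy inside $R(A,B,C)$ you should also rule out its first sub-case $R=|A|+|B|+|C|$ by checking $|C|(|B|+4|A|)>|AB|$, which here amounts to $44p_1^4-68p_1^2-15<0$ on $(0,1)$; this holds, so nothing breaks, but the check belongs in the argument.
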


\begin{proof}
Let $f\in \mathcal{C}$ be of the form \eqref{f}. Then by \eqref{star},
\begin{equation}\label{3.2.1}
\cfrac{zf'(z)}{f(z)}=p(z)
\end{equation}
for some $p \in \mathcal{P}$ of the form \eqref{p}. By comparing the coefficients on both the sides of \eqref{3.2.1}, we obtain
	\begin{equation}\label{3.2.2}
		\begin{aligned}
			& a_2=c_1, \\[2mm]
			& a_3=\frac{1}{2}(c_2+c_1^2), \\[2mm]
			& a_4=\frac{1}{6}\left(2c_3+3c_1c_2+c_1^3 \right).
		\end{aligned}
	\end{equation}
Hence by \eqref{hankel}, we have
$$
H_{2,1}(F_{f^{-1}}/2)=\cfrac{1}{48}\left(6c_1^4-6c_1^2c_2-3c_2^2+4 c_1c_3\right).
$$
Now using \eqref{c1}-\eqref{c3} and by straightforward computation,
\begin{equation}\label{3.2.3}
\begin{aligned}
H_{2,1}(F_{f^{-1}}/2)
&=\frac{13}{12}p_1^4-\frac{5}{2}(1-p_1^2)p_2-\frac{1}{12}(1-p_1^2)(3+p_1^2)p_2^2\\
&\,\,\,\, \quad \quad \quad +\frac{1}{3}(1-p_1^2)(1-|p_1^2|)p_1p_3.
\end{aligned}
\end{equation}
Now we have the following cases on $p_1$.\\[1mm]
\textbf{Case 1:} Let $p_1=1$, then by \eqref{3.2.3}, we obtain
$$
|H_{2,1}(F_{f^{-1}}/2)|=\frac{13}{12}.
$$
\textbf{Case 2:} Let $p_1=0$, then by \eqref{3.2.3}, we obtain
$$
|H_{2,1}(F_{f^{-1}}/2)|=\frac{1}{4}|p_2^2|\leq\frac{1}{4}.
$$
\textbf{Case 3:} Let $p_1 \in (0,1)$. Applying the triangle inequality in \eqref{3.2.3} and by using the fact
that $|p_3| \leq 1$, we obtain
\begin{equation}\label{3.2.4}
\begin{aligned}
H_{2,1}(F_{f^{-1}}/2)
&=\frac{1}{3}p_1(1-p_1^2)\left(\left| \frac{13p_1^3}{4(1-p_1^2}-\frac{5}{2}p_1 p_2-\frac{3+p_1^2}{4p_1}p_2^2 \right|+1-|p_2^2|\right)\\
&\leq \frac{1}{24}p_1(1-p_1^2)\left(\left| A+B p_2+C p_2^2 \right|+1-|p_2^2|\right),
\end{aligned}
\end{equation}
where 
$$
A:=\frac{13p_1^3}{4(1-p_1^2)}, \quad B:=-\frac{5}{2}p_1,\quad C:=-\frac{3+p_1^2}{4p_1}.
$$
Since $AC < 0$, so we can apply case (ii) of Lemma \ref{y(a,b,c)}.\\
[3mm]
\textbf{3(a).} Note that for $p_{1} \in(0,1)$ we have
$$
-4 A C\left(\frac{1}{C^{2}}-1\right)-B^{2}=-\cfrac{3p_{1}^{2}(16+p_1^2)}{(3+p_{1}^{2}} \leq  0.
$$
Moreover, the inequality $|B|<2(1-|C|)$ is equivalent to $3 - 4 p_1 + 2 p_1^2 < 0$ which is not true for $p_{1} \in(0,1)$.\\
[3mm]
\textbf{3(b).} It is easy to see that
$$
\min \left\{4(1+|C|)^{2},-4 A C\left(\frac{1}{C^{2}}-1\right)\right\}=-4 A C\left(\frac{1}{C^{2}}-1\right),
$$
and from $3(a),$ we know that 
$$
-4 A C\left(\frac{1}{C^{2}}-1\right) \leq B^{2}.
$$
Therefore, the inequality $B^{2} < \min \left\{4(1+|C|)^{2},-4 A C\left(\frac{1}{C^{2}}-1\right)\right\}$ does not holds for $0<p_1<1$.\\
[3mm]
\textbf{3(c).} Note that the inequality $|C|(|B|+4|A|)-|A B| \leq 0$ is equivalent to $44p_1^4-68p_1^2
-16-p_1^4\geq 0$, which is false for $p_{1} \in(0,1)$.\\
[3mm]
\textbf{3(d).}
Observe that the inequality 
$$|A B|-|C|(|B|-4|A|)=\cfrac{96 p_1^4+88 p_1^2-15}{1 - p1^2} \leq 0
$$
is equivalent to $96 p_1^4+88 p_1^2-15\leq 0$, which is true for 
$$
0<p_1\leq p_1''=\frac{1}{2}\sqrt{\cfrac{\sqrt{211}-11}{6}} \approx 0.38328.
$$
From \eqref{3.2.1} and Lemma \ref{y(a,b,c)}, we obtain
\begin{equation}
\begin{aligned}
H_{2,1}(F_{f^{-1}}/2)
&\leq \frac{1}{3}p_1(1-p_1^2)(-|A|+|B|+|C|)\\
&= \frac{1}{12}(3+8p_1^2-24p_1^4)=k(p_1)
\end{aligned}
\end{equation}
where $k(x)=3+8x^2-24x^4$, $ 0< x\leq p_1''$. Since the function $k'(x)>0$ in $ 0< x\leq p_1''$, we get $k(x)\leq k(p_1'')$ for $ 0< x\leq p_1''$.
Therefore, we have
$$
|H_{2,1}(F_{f^{-1}}/2)|\leq \cfrac{1}{48}(-58+5\sqrt{211)} \approx 0.304775.
$$
\textbf{3(e).} Furthermore, for $p_1''<p1<1$, from \eqref{3.2.3} and Lemma \ref{y(a,b,c)}, it follows that 
\begin{equation}
\begin{aligned}
H_{2,1}(F_{f^{-1}}/2)
&\leq \frac{1}{24}p_1(1-p_1^2)(|C|+|A|) \sqrt{1-\frac{B^{2}}{4 A C}}\\
&= \frac{1}{6}(12p_1^4 -2p_1^2+3)\sqrt{\cfrac{16-3p_1^2}{39+13p_1^2}}=k(p_1)
\end{aligned}
\end{equation}
where $$k(x)=\cfrac{1}{6}\sqrt{\cfrac{16-3x^2}{39+13x^2}}\left(12x^4 -2x^2+3\right) \text{for} p_1''<x<1.$$ 
\\[1mm]
As $k'(x)=0$ has no solution in $(p_1'',1)$ and $k'(x)>0$, the maximum exits at $x=1$.
Therefore we conclude that 
$$ |H_{2,1}(F_{f^{-1}}/2)| \leq k(1)=\cfrac{13}{12} \quad \text{for} \quad p_1''<x<1.
$$
Summarizing the cases 1-3 it follows that the inequality \eqref{thm2} holds. We now proceed to prove the equality part. Consider the Koebe function 
$$
k(z)=\frac{z}{(1-z)^2}.
$$
Clearly $k \in \mathcal{S}^*$ and it is easy to show that 
$$
|H_{2,1}(F_{k^{-1}}/2)|=\cfrac{13}{12}.
$$
This completes the proof.

\end{proof}

{\bf Acknowledgment.}
The first author thanks SERB-CRG and the second author's research work is supported by CSIR-UGC.

\end{document}